\newcommand{\N}{{\mathbb{N}}}
\newcommand{\R}{\mathbb{R}}
\newcommand{\bD}{\mathfrak{d}}
\newcommand{\bP}{\mathbf{P}}
\newcommand{\mL}{\mathcal{L}}
\newcommand{\mR}{\mathcal{R}}
\newcommand{\mD}{\mathcal{D}}
\newcommand{\mU}{\mathcal{U}}
\newcommand{\bV}{\mathbf{V}}
\newcommand{\e}[1]{\ensuremath{10^{#1}}}
\newcommand{\U}{{\mathbf{u}}}
\newcommand{\B}{{\mathbf{B}}}
\newcommand{\X}{{\mathbf{x}}}
\newcommand{\g}{{\mathbf{g}}}
\newcommand{\Div}{\mathrm{div}}
\newcommand{\Curl}{\mathrm{curl}}
\newcommand{\diag}[1]{\mathrm{diag}\left(#1\right)}
\newcommand{\Comment}[1]{}
\renewcommand{\i}{\ifmmode\mathit{\mathchar"7010 }\else\char"10 \fi}
\renewcommand{\j}{\ifmmode\mathit{\mathchar"7011 }\else\char"11 \fi}
\newcommand{\seq}[1]{\left\{#1\right\}}
\newcommand{\Dx}{\Delta x}
\newcommand{\Dy}{\Delta y}
\newcommand{\Dt}{\Delta t}
\newcommand{\norm}[1]{\left\|#1\right\|}
\newcommand{\abs}[1]{\left|#1\right|}
\newcommand{\charf}{\mathbf{1}}
\newtheorem{theorem}{Theorem}[section]
\newtheorem{lemma}{Lemma}[section]
\newtheorem{remark}{Remark}[section]
\theoremstyle{definition} 
\newtheorem*{maintheorem*}{Main Theorem}
\numberwithin{equation}{section}
\numberwithin{figure}{section}
\numberwithin{table}{section}
\newcounter{asnr}
\ifnum\value{asnr}=0 \stepcounter{asnr} 
\title[SBP-SAT schemes for Magnetic induction equations]{Implicit finite difference schemes for the Magnetic Induction equations}
\author[U. Koley]{U. Koley} \address[Ujjwal Koley]{\newline Centre of Mathematics for Applications (CMA)
  \newline University of Oslo\newline P.O. Box 1053, Blindern\newline
  N--0316 Oslo, Norway} \email[]{ujjwalk@cma.uio.no}
\keywords{Conservation laws; Induction equations; Summation by parts operators; Simaltaneous approximation term; Fully-discrete schemes; High order of accuracy.}
\date{\today}
\begin{document}

\begin{abstract}
  We describe high order accurate and stable fully-discrete finite difference schemes
  for the initial-boundary value problem associated with the magnetic
  induction equations. These equations model the evolution of a
  magnetic field due to a given velocity field. The finite difference
  schemes are based on Summation by Parts (SBP) operators for spatial
  derivatives and a Simultaneous Approximation Term (SAT) technique
  for imposing boundary conditions. We present various numerical
  experiments that demonstrate both the stability as well as high
  order of accuracy of the schemes.
\end{abstract}

\maketitle

\section{Introduction}
\label{sec:intro}
In this paper, we study the magnetic induction equation
\begin{equation}
  \label{eq:induction}
 \partial_t \B + \Curl(\B\times\U) =  - \U \Div(\B), 
\end{equation}
where the unknown $\B = \B(\X,t) \in \R^3$ describes the magnetic field 
of a plasma in three space dimensions with coordinate $\X =(x,y,z)$.
The above equation models the evolution of the magnetic field in the plasma 
which is moving with a prescribed velocity field $\U(\X,t)$.
These equations arise in a wide
variety of applications in plasma physics, astrophysics and electrical
engineering. One important application are the equations of
magneto-hydro dynamics (MHD).
Observe that, by taking divergence on both sides of
(\ref{eq:induction}) we get
\begin{equation}
  \label{eq:divt}
  (\Div(\B))_t + \Div\left(\U\Div(\B)\right) = 0.
\end{equation}
Hence, if $\Div(\B_0(\X))=0$, also $\Div(\B(\X,t))=0$ for $t>0$.

There are many forms of induction equations available in literature (see \cite{uk, fkrsid1}).
Here we are going to work with the following ``conservative'' symmetric form,
\begin{equation}
  \begin{aligned}
    \partial_t\B + \left(\U\cdot\nabla\right)\B = M(D\U)\B,
  \end{aligned}
  \label{eq:induc1}  
\end{equation}
where the $D\U$ denotes the gradient of $\U$ and the matrix $M(D\U)$ is given by
$$
M(D\U)=
 \begin{pmatrix}
   -\partial_y u^2 - \partial_z u^3 & \partial_y u^1 & \partial_z
   u^1\\
   \partial_x u^2 & -\partial_x u^1-\partial_z u^3 &+\partial_z u^2\\
   \partial_x u^3 & \partial_y u^3 & -\partial_x u^1 - \partial_y u^2
  \end{pmatrix}.
$$
We are aware of some other results in the literature related to induction 
equation \cite{BeKr1, BlBr1, DW1, fkrsid1, TF1, uk1}.   
But, boundary conditions were not considered either
of the aforementioned papers.
In \cite{uk}, authors have described a high order accurate and stable finite
difference schemes for the initial-boundary problem associated with
the magnetic induction equations. The approach is based on a ``semi-discrete''
approximation where one discretizes the spatial variable, thereby reducing
the equations to a system of ordinary differential equations. However, we 
stress that for numerical computations also this set of ordinary 
differential equations will have to be discretized in order to be solved.
Thus in order to have a completely satisfactory numerical method, one seeks
a fully discrete scheme that reduces the actual computation to a solution
of a finite set of algebraic equations. 

Our aim in this paper is to design stable and high-order
accurate ``fully-discrete'' schemes for initial-boundary value problems corresponding to
the magnetic induction equations by discretizing the non-conservative symmetric 
form (\ref{eq:induc1}). The spatial derivatives are
approximated by second and fourth-order SBP (Summation-By-Parts)
operators. The boundary conditions are weakly imposed by using a SAT
(Simultaneous Approximation Term) and backward Euler method used for
temporal discretization. The SBP-SAT framework has been used to obtain stable and 
accurate high order schemes for a wide variety of hyperbolic problems in recent years. 
See \cite{SN1} and the references therein for more details.

The rest of this paper is organized as follows: In
Section~\ref{sec:cp}, we state the energy estimate for the
initial-boundary value problem corresponding to (\ref{eq:induc1}. In
Section~\ref{sec:scheme}, we present the fully-discrete SBP-SAT scheme and show stability. 
Numerical experiments are presented in
Section~\ref{sec:numex} and conclusions are drawn in
Section~\ref{sec:conc}.

\section{The Continuous problem}
\label{sec:cp}
For ease of notation, we shall restrict ourselves to two spatial
dimensions in the remainder of this paper. Extending the results to
three dimensions is straightforward.

In two dimensions, the non-conservative symmetric form
(\ref{eq:induc1}) reads
\begin{equation}
  \label{eq:main}
  \begin{aligned}
    \B_t + \Lambda_1\B_x + \Lambda_2\B_y - C\B &= 0,
    \end{aligned}
\end{equation}
where
\begin{align*}
  \Lambda_1&=
  \begin{pmatrix}
    u^1 & 0 \\
    0 & u^1
  \end{pmatrix},\quad
  \Lambda_2=
  \begin{pmatrix}
    u^2 & 0 \\
    0 & u^2
  \end{pmatrix},\quad
  C=\begin{pmatrix}
    -\partial_y u^2 & \partial_y u^1 \\
    \partial_x u^2 & -\partial_x u^1
  \end{pmatrix},
\end{align*}
with $\B = \left( B^1, B^2\right)^T$ and $\U = \left(u^1,u^2\right)^T$
denoting the magnetic and velocity fields respectively. In component
form, (\ref{eq:main}) becomes
\begin{equation}
  \label{eq:NC2D}
  \begin{aligned}
    (B^1)_t + u^1(B^1)_x + u^2(B^1)_y &= -(u^2)_y B^1 + (u^1)_y B^2 \\
    (B^2)_t + u^1(B^2)_x + u^2(B^2)_y &= (u^2)_x B^1 - (u^1)_x B^2.
  \end{aligned}
\end{equation}
To begin with, we shall consider (\ref{eq:main}) in the domain
$(x,y)\in\Omega=[0,1]^2$.

We augment (\ref{eq:main}) with initial conditions,
\begin{equation}
  \label{eq:initial}
  \begin{aligned}
    \B(\X,0) = \B_0(\X) \quad \X\in \Omega,
 \end{aligned}
\end{equation}
and Dirichlet boundary conditions,
\begin{equation}
  \label{eq:boundary}
  \begin{aligned}
    \charf_{\seq{u^1(0,y,t)>0}}\Bigl(\B(0,y,t) &= \g(0,y,t)\Bigr),
    \quad
    \charf_{\seq{u^1(1,y,t)<0}} \Bigl(\B(1,y,t) = \g(1,y,t)\Bigr),\\
    \charf_{\seq{u^2(x,0,t)>0}}\Bigl(\B(x,0,t) &=\g(x,0,t)\Bigr),
    \quad \charf_{\seq{u^2(x,1,t)<0}}\Bigl(\B(x,1,t) = \g(x,1,t)\Bigr)
  \end{aligned}
\end{equation}
where $\charf_{A}$ denotes the characteristic function of the set $A$.
Note that we only impose boundary conditions on the set where the
characteristics are entering the domain.
We shall always assume that the initial and boundary data satisfy the
compatibility conditions, i.e., specific criteria that guarantee smoothness 
of the solution, see \cite{GustafssonKreissOliger}.
\begin{theorem}
  \label{theo:cp}
  Assume that $\B_0\in H^1(\Omega)$, that $\g\in
  H^1(\partial\Omega\times [0,T])$ for $T>0$ and that $u^1$ and $u^2$
  are in $H^2(\Omega\times[0,T])$. Then there exists a function $\B \in
  C([0,T],L^2(\Omega))\cap L^\infty([0,T];H^1(\Omega))$ which is the
  unique weak solution of (\ref{eq:main}) with the initial and
  boundary conditions (\ref{eq:initial}) and (\ref{eq:boundary}).

  Furthermore, it satisfies the following stability estimate
  \begin{equation}
    \label{eq:enest}
    \norm{\B(\cdot,t)}_{H^1(\Omega)}^2\le e^{\alpha t}\left(\norm
    {\B_0}_{H^1(\Omega)}^{2} + \norm{\g}_{H^1(\partial\Omega\times(0,t))}\right).
  \end{equation}
  where $\alpha$ is a positive constant.
\end{theorem}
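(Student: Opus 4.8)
The plan is to establish the energy estimate \eqref{eq:enest} first, since the a priori bound is both the heart of the matter and the engine that drives the existence-uniqueness part of the theorem. The strategy is the classical energy method for symmetric hyperbolic systems: multiply the equation by a suitable weighted version of $\B$, integrate over $\Omega$, and integrate by parts, carefully tracking the boundary terms so that the inflow boundary conditions \eqref{eq:boundary} control them. I would first prove an $L^2$ estimate, then differentiate the system in $x$ and $y$ to get analogous equations for $\B_x$ and $\B_y$ and obtain the corresponding $H^1$ estimate.

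\emph{$L^2$ estimate.} First I would take the $L^2(\Omega)$ inner product of \eqref{eq:main} with $\B$. The symmetric (scalar-matrix) structure of $\Lambda_1=u^1 I$ and $\Lambda_2=u^2 I$ is what makes this work: the convective terms combine into
\begin{equation*}
\int_\Omega \B^T\!\left(\Lambda_1 \B_x + \Lambda_2 \B_y\right)\,dx\,dy
= \tfrac12\int_\Omega \left(u^1 (\abs{\B}^2)_x + u^2 (\abs{\B}^2)_y\right)\,dx\,dy.
\end{equation*}
Integrating by parts transfers the derivative onto $u^1,u^2$, producing a volume term $-\tfrac12\int_\Omega (\Div\U)\,\abs{\B}^2$ together with boundary integrals over $\partial\Omega$. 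The volume term, the reaction term $-\int_\Omega \B^T C\B$, and the leftover convective contributions are all bounded by $\alpha(t)\norm{\B}_{L^2}^2$, where $\alpha$ depends on $\norm{D\U}_{L^\infty}$; here the hypothesis $u^1,u^2\in H^2(\Omega\times[0,T])$ (hence $D\U$ controlled, in 2D $H^2\hookrightarrow C$) is exactly what is needed. The boundary integrals have the form $\tfrac12\int_{\partial\Omega} (\U\cdot\nu)\,\abs{\B}^2\,dS$, which I would split into the outflow part (where $\U\cdot\nu\ge 0$, giving a good sign and hence a term I can discard or keep) and the inflow part (where $\U\cdot\nu<0$), on which the Dirichlet data \eqref{eq:boundary} lets me replace $\B$ by $\g$, bounding it by $\norm{\g}_{H^1(\partial\Omega\times(0,t))}$. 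Gr\"onwall's inequality then yields the $L^2$ version of \eqref{eq:enest}.

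\emph{$H^1$ estimate.} For the full $H^1$ bound I would differentiate \eqref{eq:main} with respect to $x$ (and separately $y$). Writing $\B_x$, the differentiated system reads $(\B_x)_t + \Lambda_1 (\B_x)_x + \Lambda_2 (\B_x)_y - C\B_x = F$, where the commutator/source term $F$ collects $-(\Lambda_1)_x\B_x - (\Lambda_2)_x\B_y + C_x\B$ and is controlled in $L^2$ by $\norm{\B}_{H^1}$ together with second derivatives of $\U$ — again covered by the $H^2$ hypothesis on $\U$. Repeating the energy argument on $\B_x$ and $\B_y$, summing with the $L^2$ estimate, and applying Gr\"onwall gives \eqref{eq:enest}.

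\emph{Existence and uniqueness.} Uniqueness is immediate: the difference of two solutions solves the homogeneous problem with zero data, so the energy estimate forces it to vanish. For existence I would use the a priori estimate to pass to the limit in a standard approximation scheme — either a vanishing-viscosity regularization (adding $\eps\Delta\B$, solving the resulting parabolic problem, and extracting a weakly-$*$ convergent subsequence in $L^\infty([0,T];H^1)$ using the uniform bound), or a Galerkin approximation. The continuity-in-time regularity $\B\in C([0,T];L^2(\Omega))$ follows from the uniform $H^1$ bound plus an equicontinuity argument on $\B_t$ read off from the equation. \textbf{The main obstacle} is the correct and rigorous treatment of the inflow boundary integrals: making the boundary energy flux $\tfrac12\int_{\partial\Omega}(\U\cdot\nu)\abs{\B}^2\,dS$ precise requires trace estimates justifying the restriction of $\B\in H^1(\Omega)$ to $\partial\Omega$ in $L^2$, and the characteristic-function weighting in \eqref{eq:boundary} means the sign of $\U\cdot\nu$ (and hence the inflow/outflow split) varies along $\partial\Omega$ and in time — so the estimate must be organized to absorb the outflow contribution with the correct sign while bounding only the genuinely inflow portion by the boundary data $\g$.
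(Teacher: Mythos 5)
You should know at the outset that the paper contains no proof of Theorem~\ref{theo:cp}: the introduction explicitly says the energy estimate is only \emph{stated} in Section~\ref{sec:cp}, with the analysis implicitly deferred to the references \cite{GustafssonKreissOliger,uk}. So there is no proof in the paper to compare yours against, and your proposal has to stand on its own. On its own terms, your outline is the canonical one for linear symmetric hyperbolic initial-boundary value problems, and its architecture is sound: the $L^2$ estimate with the inflow/outflow splitting of the boundary flux $\tfrac12\int_{\partial\Omega}(\U\cdot\nu)\abs{\B}^2\,dS$, the differentiated system for the $H^1$ bound, uniqueness by linearity plus the energy estimate, and existence by vanishing viscosity or Galerkin.

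There is, however, one step that does not hold as you wrote it, and it is precisely where the real work lies. You justify the Gr\"onwall constant by asserting that $u^1,u^2\in H^2(\Omega\times[0,T])$ gives $D\U$ controlled in $L^\infty$ via ``$H^2\hookrightarrow C$ in 2D.'' But $\U$ is a function of \emph{three} variables (two space, one time); $H^2$ in three variables embeds into $C^{0,1/2}$, which bounds $\U$ itself, while $D\U$ lies only in $H^1$ of three variables, which embeds into $L^6$ but not into $L^\infty$. Consequently $\norm{D\U}_{L^\infty}$ — on which your volume term $-\tfrac12\int_\Omega(\Div\U)\abs{\B}^2$, your reaction term $\int_\Omega\B^TC\B$, and your $H^1$ commutator bound all rely — is not available from the stated hypotheses. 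Closing the estimates needs an extra ingredient: for instance a trace-in-time argument ($\U(\cdot,t)\in H^{3/2}(\Omega)$, hence $D\U(\cdot,t)\in H^{1/2}(\Omega)\hookrightarrow L^4(\Omega)$) combined with Gagliardo--Nirenberg interpolation of $\B$ between $L^2$ and $H^1$ so that the resulting $L^p$ norms of $\B$ can be absorbed at the $H^1$ level. The term $C_x\B$ paired with $\B_x$ in your $H^1$ step is even more delicate: $C_x$ involves second derivatives of $\U$, which at fixed time are only distributions in $H^{-1/2}(\Omega)$, and even working with space-time integrals one has $C_x\in L^2$ while $\B\in H^1(\Omega)\not\hookrightarrow L^\infty(\Omega)$ in two dimensions, so H\"older alone does not close this term; one needs a Moser-type product estimate, a mollification of the coefficients with a limiting argument, or effectively a slightly stronger hypothesis on $\U$. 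These borderline regularity issues are the genuinely hard part of the theorem, and your sketch passes over them as if they were routine.
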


\section{Fully-discrete Scheme}
\label{sec:scheme}
To simplify the treatment of the boundary terms we let 
the computational domain $\Omega$ be the unit square. It is straightforward to generalize our results
to other domains by coordinate transformations (see \cite{Svard04}), and to three dimensions. 

The SBP finite difference schemes for one-dimensional derivative approximations are as follows. Let $[0,1]$ be the domain discretized with $x_j=j\Dx$, $j=0,\dots,N-1$. A scalar grid function is defined as $w=(w_0,...w_{N-1})$. To approximate
$\partial_x w$ we use a summation-by-parts operator $D_x = P^{-1}_x
Q_x$, where $P_x$ is a diagonal positive $N\times N$ matrix, defining an inner
product
$$
(v,w)_{P_x} = v^T P_x w,
$$
such that the associated norm $\norm{w}_{P_x} = (w,w)_{P_x}^{1/2}$
is equivalent to the norm $\norm{w}=(\Dx\sum_{k} w_k^2)^{1/2}$. 
Furthermore, for $D_x$ to be a summation-by-parts operator we require
that
$$
Q_x + Q_x^T = R_N-L_N,
$$
where $R_N$ and $L_N$ are the $N\times N$ matrices: $\diag{0,\dots,1}$
and $\diag{1,\dots,0}$ respectively.  Similarly, we can define a
summation-by-parts operator $D_y=P_y^{-1}Q_y$ approximating
$\partial_y$. Later we will also need the following Lemma, proven in \cite{svardsid3}.
\begin{lemma}\label{lemma:variable}
Let $u$ be a smooth grid function. Then
\begin{equation}
 \label{eq:circdiff}
 \begin{aligned}
   \norm{D_x(u\circ w)- u\circ D_x w}_{P_x} \le C \norm{\partial_x u}_{L^\infty([0,1])} \norm{w}_{P_x}
 \end{aligned}
\end{equation}
where $(u\circ v)_j=u_jv_j$.
\end{lemma}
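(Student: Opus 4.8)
The plan is to recognize the left-hand side as the action of a commutator and to estimate that commutator entrywise. Write $U=\diag{u_0,\dots,u_{N-1}}$ for the diagonal matrix carrying the grid values of $u$, so that the Hadamard products become ordinary matrix--vector products, $u\circ w = Uw$ and $u\circ D_x w = U D_x w$. Then
\begin{equation*}
D_x(u\circ w) - u\circ D_x w = \left(D_x U - U D_x\right)w = [D_x,U]\,w .
\end{equation*}
The entries of this commutator are immediate: since right-multiplication by $U$ scales column $j$ by $u_j$ and left-multiplication scales row $i$ by $u_i$, one gets $\left([D_x,U]\right)_{ij} = (D_x)_{ij}\left(u_j-u_i\right)$. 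The whole estimate therefore reduces to bounding the $P_x$-operator norm of this banded matrix.

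Next I would exploit the two structural features of a diagonal-norm SBP operator. First, $D_x = P_x^{-1}Q_x$ is banded with a bandwidth $p$ fixed by the order of accuracy and independent of $N$, and its nonzero entries satisfy $\abs{(D_x)_{ij}}\le C/\Dx$ (both in the interior Toeplitz rows and in the finitely many boundary-closure rows). Second, because $u$ is the restriction of a smooth function to the grid, the mean value theorem gives the Lipschitz bound $\abs{u_j-u_i}\le \norm{\partial_x u}_{L^\infty([0,1])}\abs{x_j-x_i}=\norm{\partial_x u}_{L^\infty([0,1])}\abs{i-j}\,\Dx$. For any entry that is actually nonzero we have $\abs{i-j}\le p$, so combining the two estimates the factors of $\Dx$ cancel and
\begin{equation*}
\abs{\left([D_x,U]\right)_{ij}} \le \frac{C}{\Dx}\cdot\norm{\partial_x u}_{L^\infty([0,1])}\,p\,\Dx = C'\,\norm{\partial_x u}_{L^\infty([0,1])},
\end{equation*}
a bound uniform in $N$ and in the indices $i,j$.

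Finally I would pass from this entrywise bound to the desired norm inequality. The commutator is a matrix with at most $2p+1$ nonzero entries per row and per column, each bounded by $C'\norm{\partial_x u}_{L^\infty([0,1])}$; a Schur-test argument then yields $\norm{[D_x,U]}_{\ell^2\to\ell^2}\le (2p+1)\,C'\,\norm{\partial_x u}_{L^\infty([0,1])}$, and since the weight $\Dx$ enters the standard norm $\norm{\cdot}$ identically on domain and range, the same bound holds in that norm. Invoking the equivalence of $\norm{\cdot}_{P_x}$ and $\norm{\cdot}$ with constants independent of $N$ converts this into $\norm{[D_x,U]w}_{P_x}\le C\norm{\partial_x u}_{L^\infty([0,1])}\norm{w}_{P_x}$, which is exactly \eqref{eq:circdiff}. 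I expect the only real obstacle to be bookkeeping at the boundary: the SBP closure rows are not Toeplitz, so one must verify that they too satisfy the $O(1/\Dx)$ entry bound and have finite width uniformly in $N$. Once this is checked for the specific second- and fourth-order operators in use, the estimate is robust.
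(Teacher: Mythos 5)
The paper does not actually contain its own proof of this lemma: it is quoted from the reference \cite{svardsid3}, so there is nothing in the present text to compare against line by line. Your argument is correct and is, in substance, the standard proof of this commutator estimate (and essentially the argument used in the cited work): writing $D_x(u\circ w)-u\circ D_x w=[D_x,U]w$ with $U=\diag{u_0,\dots,u_{N-1}}$, noting $\left([D_x,U]\right)_{ij}=(D_x)_{ij}(u_j-u_i)$, cancelling the $1/\Dx$ in the entries of $D_x$ against the mean-value-theorem bound $\abs{u_j-u_i}\le\norm{\partial_x u}_{L^\infty([0,1])}\abs{i-j}\Dx$, and then passing to the operator norm via a Schur-type bound and the uniform equivalence of $\norm{\cdot}_{P_x}$ with $\norm{\cdot}$. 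The one caveat you correctly flag deserves emphasis: the abstract SBP definition given in the paper (diagonal positive $P_x$, norm equivalence, $Q_x+Q_x^T=R_N-L_N$) does \emph{not} by itself imply that $D_x$ is banded with entries $\mathcal{O}(1/\Dx)$, so your proof really proves the lemma for the class of finite-width SBP operators (in particular the SBP2 and SBP4 operators used in the paper, whose boundary closures are indeed of fixed width with $\mathcal{O}(1/\Dx)$ entries, uniformly in $N$). Stated with that hypothesis made explicit, your proof is complete and has the added value of being self-contained, whereas the paper defers to an unpublished preprint.
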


Next, we move on to the two-dimensional case and discretize the unit square $[0,1]^2$ using $NM$ uniformly
distributed grid points $(x_i,y_j)=(i\Dx,j\Dy)$ for $i=0,\dots,N-1$,
and $j=0,\dots,M-1$, such that $(N-1)\Dx=(M-1)\Dy=1$. We order a scalar
grid function $w(x_i,y_i)=w_{ij}$ as a column vector
$$
w=\left(w_{0,0},w_{0,1},\dots,
  w_{0,(M-1)},w_{1,0},\dots,\dots,w_{(N-1),(M-1)}\right)^T.   \label{eq:w}
$$ 

To obtain a compact notation for partial derivatives of a grid
function, we use Kronecker products. The Kronecker product of an
$N_1\times N_2$ matrix $A$ and an $M_1\times M_2$ matrix $B$ is
defined as the $N_1M_1\times N_2M_2$ matrix
\begin{equation}
\label{eq:order}
A\otimes B=
\begin{pmatrix}
  a_{11}B &\dots & a_{1N_2}B\\
  \vdots  & \ddots & \vdots \\
  a_{N_1 1}B  & \dots& a_{N_1 N_2}B  
\end{pmatrix}.
\end{equation}
For appropriate matrices $A$, $B$, $C$ and $D$, the Kronecker product
obeys the following rules:
\begin{align}
  (A\otimes B)(C\otimes D)&=(AC\otimes BD),\\
 (A\otimes B)+(C\otimes
  D)&=(A+C)\otimes(B+D),\\
   (A\otimes B)^T&=(A^T\otimes B^T).
\end{align}

Using Kronecker products, we can define 2-D difference operators. Let $I_n$ denote the $n\times n$ identity matrix, and define
$$
\bD_x = D_x \otimes I_M, \quad \bD_y = I_N \otimes D_y.
$$
For a smooth function $w(x,y)$, $(\bD_x w)_{i,j}\approx \partial_x
w(x_i,y_j)$ and similarly $(\bD_y w)_{i,j}\approx \partial_y
w(x_i,y_j)$. 

Set $\bP=P_x\otimes P_y$, define $(w,v)_{\bP}=w^T\bP v$ and the
corresponding norm $\norm{w}_{\bP}=(w,w)_{\bP}^{1/2}$. Also define 
$\mR=R_N\otimes I_M$, $\mL=L_N\otimes I_M$, $\mU=I_N \otimes R_M$ and 
$\mD=I_N \otimes L_M$. 

For a vector valued grid function $\mathbf{V}=(V^1,V^2)$, we use the following notation
$$
\bD_x \mathbf{V}=
\begin{pmatrix}
  \bD_x V_1\\ \bD_x V_2
\end{pmatrix},
$$
and so on. In the same spirit, the $\bP$ inner product of vector valued
grid functions is defined by $(\mathbf{V},\mathbf{W})_{\bP} = (V^1,W^1)_{\bP} + 
(V^2,W^2)_{\bP}$.
We also introduce (a small) time step $\Dt >0$, and use the notation
\begin{align*}
D_t^{+} p(t) = \frac{1}{\Dt} \left( p(t + \Dt) -p(t)\right),
\end{align*}
for any function $p: [0,T] \rightarrow \R$. Write $t^n = n\Dt$ for $n \in \N_0 = \N \cup \lbrace 0 \rbrace$.
We will use the notation $ V^1(x_i, y_j, t^n) = V_{ij}^{1,n}$ and so on.

\begin{remark}
Note that the Kronecker products is just a tool to facilitate the
notation. In the implementation of schemes using the operators in the
Kronecker products we can think of these as operating in their own
dimension, i.e., on a specific index. Thus, to compute $\bD_x w$, we
can view $w$ as a field with two indices, and the one-dimensional
operator $D_x$ will operate on the first index since it appears in the
first position in the Kronecker product. 
\end{remark}

The usefulness of summation by parts operators comes from this lemma.
\begin{lemma}
  \label{lem:byparts} 
  For any grid functions $v$ and $w$, we have
  \begin{equation}
    \label{eq:byparts}
    \begin{aligned}
      \left(v,\bD_xw\right)_{\bP} + \left(\bD_xv,w\right)_{\bP} &= 
      v^T \left[(\mR - \mL)(I_N \otimes P_y)\right] w\\ 
      \left(v,\bD_yw\right)_{\bP} + \left(\bD_yv,w\right)_{\bP} &= 
      v^T \left[(\mU - \mD)(P_x \otimes I_M)\right] w.
    \end{aligned}
  \end{equation}
\end{lemma}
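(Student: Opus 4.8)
The plan is to reduce the two-dimensional summation-by-parts identity to the one-dimensional SBP relation $Q_x + Q_x^T = R_N - L_N$ by exploiting the algebraic rules for Kronecker products recalled just above the lemma. The whole statement is really two parallel computations (one for $\bD_x$, one for $\bD_y$), and by the symmetry of the construction it suffices to carry out the $\bD_x$ case in full; the $\bD_y$ case follows by interchanging the roles of the two tensor factors.

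**First I would** write the left-hand side purely in terms of matrices. Since $\bP = P_x \otimes P_y$ and $\bD_x = D_x \otimes I_M = (P_x^{-1}Q_x)\otimes I_M$, the two inner products combine into
\begin{align*}
  \left(v,\bD_x w\right)_{\bP} + \left(\bD_x v, w\right)_{\bP}
    &= v^T \bP\, \bD_x w + (\bD_x v)^T \bP\, w \\
    &= v^T \left[\bP\,\bD_x + (\bD_x)^T \bP\right] w.
\end{align*}
The goal is thus to simplify the bracketed matrix $\bP\,\bD_x + \bD_x^T \bP$.

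**Next I would** push the Kronecker structure through using $(A\otimes B)(C\otimes D) = (AC\otimes BD)$ and $(A\otimes B)^T = A^T\otimes B^T$. Computing the first term, $\bP\,\bD_x = (P_x\otimes P_y)(P_x^{-1}Q_x\otimes I_M) = (Q_x \otimes P_y)$; and for the second, $\bD_x^T \bP = (Q_x^T P_x^{-1}\otimes I_M)(P_x\otimes P_y) = (Q_x^T \otimes P_y)$. Adding and using bilinearity of $\otimes$ over the first factor,
\begin{align*}
  \bP\,\bD_x + \bD_x^T \bP = (Q_x + Q_x^T)\otimes P_y = (R_N - L_N)\otimes P_y,
\end{align*}
where the last equality is precisely the one-dimensional SBP property. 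Finally I would rewrite $(R_N-L_N)\otimes P_y$ to match the stated right-hand side: since $R_N\otimes P_y = (R_N\otimes I_M)(I_N\otimes P_y) = \mR\,(I_N\otimes P_y)$, and similarly for $L_N$, we obtain $(R_N-L_N)\otimes P_y = (\mR-\mL)(I_N\otimes P_y)$, as claimed. The $\bD_y$ identity is obtained identically, using $\bD_y = I_N\otimes D_y$, which yields $(P_x\otimes(Q_y+Q_y^T)) = (P_x\otimes I_M)(\mU-\mD)$ after the analogous regrouping.

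**The only point that requires care** — and the closest thing to an obstacle — is the bookkeeping in the final regrouping step, namely verifying that $(R_N-L_N)\otimes P_y$ really equals $(\mR-\mL)(I_N\otimes P_y)$ with the factors in the order written, and that $P_x$ commutes past $Q_x$ correctly in forming $\bD_x^T\bP$ (here one uses that $P_x$ is symmetric, so $(P_x^{-1}Q_x)^T = Q_x^T P_x^{-1}$, and $P_x^{-1}P_x = I_N$). These are purely mechanical applications of the Kronecker rules, so the proof presents no genuine difficulty; the content is entirely in observing that the two-dimensional identity inherits its boundary term from the one-dimensional SBP relation tensored against the transverse mass matrix.
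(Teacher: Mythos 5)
Your proof is correct and follows essentially the same route as the paper: both reduce the identity to the one-dimensional SBP relation $Q_x+Q_x^T=R_N-L_N$ via the Kronecker mixed-product and transpose rules, with the only cosmetic difference being that you symmetrize first (simplifying $\bP\bD_x+\bD_x^T\bP$ directly) while the paper expands $\left(v,\bD_xw\right)_{\bP}$ alone and splits $Q_x=-Q_x^T+(Q_x+Q_x^T)$ to move the second term across. No gaps; your final regrouping $(R_N-L_N)\otimes P_y=(\mR-\mL)(I_N\otimes P_y)$ matches the paper's last step exactly.
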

Observe that this lemma is the discrete version of the equality
$$
\iint_\Omega v\left(\partial_x w\right)\,dxdy + \iint_\Omega
\left(\partial_x v\right) w \,dxdy =   
\int_0^1 v(1,y) w(1,y) - v(0,y)w(0,y) \,dy.
$$
\begin{proof}
  We calculate
  \begin{align*}
    \left(v,\bD_xw\right) &= v^T\left(P_x\otimes P_y\right) 
    \left(P_x^{-1}Q_x\otimes I_M\right) w \\
    &=v^T Q_x \otimes P_y w \\
    &= -v^T Q_x^T \otimes P_y w + v^T (Q_x + Q_x^T) \otimes P_y w \\
    &= - (P_x^{-1}Q_x\otimes I_M v)^T \left(P_x\otimes P_y \right) w +
    v^T\left(R_N - L_N\right)\otimes P_y w\\
    &= - \left(\bD_xv\right)^T \left(P_x\otimes P_y \right) w 
    + v^T \left(\mR-\mL\right)(I_N \otimes P_y) w.
  \end{align*}
  The second equality is proved similarly.
\end{proof}

Before we define our numerical schemes, we collect some useful results
in a lemma.
\begin{lemma} 
  \label{lem:kurlkurl} 
  If $u$ is a grid function, then 
  \begin{equation}
    \label{eq:boundaries}
    \begin{aligned}
      \left(\bV,u\circ\bD_x\bV\right)_{\bP} &= \frac{1}{2} \bV^T
      [(\mR-\mL)(I_N\otimes
      P_y)] \left(u\circ\bV\right)
      \\
      &\qquad +
      \frac{1}{2}\left(u\circ \bD_x \bV - \bD_x\left(u\circ \bV\right),
        \bV\right)_{\bP},
      \\
      \left(\bV,u\circ\bD_y\bV\right)_{\bP} &= \frac{1}{2} \bV^T
      [(\mU-\mD)(P_x\otimes I_M)] \left(u\circ\bV\right)
      \\
      &\qquad +
       \frac{1}{2}\left(u\circ \bD_y \bV - \bD_y\left(u\circ \bV\right),
        \bV\right)_{\bP}.
    \end{aligned}
  \end{equation}
\end{lemma}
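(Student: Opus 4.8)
\emph{Proof plan.} The plan is to prove the first identity componentwise and then sum over the two components of $\bV=(V^1,V^2)$, since both the $\bP$ inner product and $\bD_x$ act blockwise on $\bV$; the second identity follows verbatim with $\bD_x$, $\mR$, $\mL$ replaced by $\bD_y$, $\mU$, $\mD$ and $(I_N\otimes P_y)$ by $(P_x\otimes I_M)$. So I fix a scalar grid function $V$ and write $U=\diag{u}$ for the diagonal matrix of Hadamard multiplication, so that $u\circ w=Uw$. The structural fact driving the whole argument is that $U$, $\bP=P_x\otimes P_y$, and the boundary matrix $(\mR-\mL)(I_N\otimes P_y)$ are all diagonal, hence symmetric and mutually commuting (here one uses that $R_N$, $L_N$, $P_x$, $P_y$ are diagonal). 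This gives the self-adjointness relation $(u\circ a,b)_{\bP}=(a,u\circ b)_{\bP}$, because $\bP U=U\bP$, together with the identity $(u\circ V)^T(\mR-\mL)(I_N\otimes P_y)\,V = V^T(\mR-\mL)(I_N\otimes P_y)\,(u\circ V)$.

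First I would apply Lemma~\ref{lem:byparts} with $v=u\circ V$ and $w=V$, which yields
$$(u\circ V,\bD_x V)_{\bP}+(\bD_x(u\circ V),V)_{\bP}=(u\circ V)^T(\mR-\mL)(I_N\otimes P_y)\,V.$$
Applying the self-adjointness relation to the first term on the left to rewrite $(u\circ V,\bD_x V)_{\bP}=(V,u\circ\bD_x V)_{\bP}$, and the commuting-diagonal identity on the right, I obtain
$$(V,u\circ\bD_x V)_{\bP}=V^T(\mR-\mL)(I_N\otimes P_y)\,(u\circ V)-(\bD_x(u\circ V),V)_{\bP}.$$

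The final step is the symmetrization that produces the factors of $\tfrac12$ and groups the commutator. Since $\bP$ is symmetric, $(V,u\circ\bD_x V)_{\bP}=(u\circ\bD_x V,V)_{\bP}$, so averaging this trivial identity with the displayed equation gives
$$(V,u\circ\bD_x V)_{\bP}=\tfrac12 V^T(\mR-\mL)(I_N\otimes P_y)(u\circ V)+\tfrac12\bigl(u\circ\bD_x V-\bD_x(u\circ V),V\bigr)_{\bP},$$
which is exactly the claimed component identity; summing over $V^1$ and $V^2$ recovers the vector statement.

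I do not anticipate a genuine obstacle. The only point requiring care is the bookkeeping of which matrices commute, i.e.\ checking that all the operators built from $R_N$, $L_N$, $P_x$, $P_y$ are diagonal so that $U$ may be moved freely past $\bP$ and past the boundary matrix. What is worth highlighting is the role of the symmetrization trick: it is what forces the boundary term to appear with coefficient $\tfrac12$ and isolates the commutator $u\circ\bD_x V-\bD_x(u\circ V)$, which is precisely the quantity later controlled by Lemma~\ref{lemma:variable} when this lemma is used to close the energy estimate.
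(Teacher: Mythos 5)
Your proof is correct and follows essentially the same route as the paper's: both arguments rest on Lemma~\ref{lem:byparts} applied to the pairing of $\bD_x(u\circ \bV)$ with $\bV$, the diagonality of $\bP$, of $\diag{u}$, and of the boundary matrix $(\mR-\mL)(I_N\otimes P_y)$ to move $u\circ$ across inner products, and the same algebraic rearrangement yielding the factor $\tfrac12$ and the commutator $u\circ\bD_x\bV-\bD_x(u\circ\bV)$. The only cosmetic difference is that the paper introduces the commutator at the outset by adding and subtracting $\bD_x(u\circ\bV)$ and then moves the duplicated term to the left-hand side, which is algebraically identical to your concluding averaging step.
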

\begin{proof}
  To show \eqref{eq:boundaries}, first note that since $\bP$ is diagonal, $(u\circ
  \bD \bV,\bV)_{\bP} = (\bD \bV,u\circ \bV)_{\bP}$. We use
  Lemma~\ref{lem:byparts} to calculate
  \begin{align*}
    \left(u\circ \bD_x \bV, \bV \right)_{\bP}
    &= \left(\bD_x(u\circ \bV\right),\bV)_{\bP} + 
    \left(u\circ \bD_x \bV - \bD_x\left(u\circ \bV\right), \bV\right)_{\bP} \\
    &= -\left(u\circ \bV, \bD_x \bV\right)_{\bP} + 
    \bV^T \left(\mR-\mL\right)\left(I_N\otimes P_y\right) (u\circ \bV)
    \\
    &\qquad +
    \left(u\circ \bD_x \bV - \bD_x\left(u\circ \bV\right), \bV\right)_{\bP}
    \\
     &= -\left(u\circ \bD_x \bV,\bV\right)_{\bP} + 
    \bV^T \left(\mR-\mL\right)\left(I_N\otimes P_y\right) (u\circ \bV)
    \\
    &\qquad +
    \left(u\circ \bD_x \bV - \bD_x\left(u\circ \bV\right), \bV\right)_{\bP}.
  \end{align*}
  This shows the first equation in \eqref{eq:boundaries}, the second
  is proved similarly.
\end{proof}
Now we are in a position to state our scheme(s). For $\ell=1$ or $2$
we will use the notation $u^\ell$ for both the grid function defined
by the function $u^\ell(x,y)$ and for the function itself. Similarly,
for the boundary values, we use the notation $h$ and $g$ for both
discrete and continuously defined functions. Hopefully,
it will be apparent from the context what we refer to. 

The differential equation \eqref{eq:main} will be discretized in an obvious manner. We incorporate the boundary conditions by penalizing boundary
values away from the desired ones with a $\mathcal{O}(1/\Dx)$ term. To
this end set 
$$
\mathcal{B}=\left[ \left(P_x^{-1}\otimes I_M\right)\left(\Sigma_{\mL}\mL +
    \Sigma_{\mR}\mR\right) + \left(I_N\otimes P_y^{-1}\right)
  \left(\Sigma_{\mD}\mD + \Sigma_{\mU}\mU\right)\right],
$$
where $\Sigma_{\mL}$, $\Sigma_{\mR}$, $\Sigma_{\mD}$ and
$\Sigma_{\mU}$ are diagonal matrices, with components $(\sigma_{\mL})_{jj}$  ordered in the same way as in (\eqref{eq:order}) (and similarly for the other penalty matrices), to be specified later. 
   
With this  notation the scheme for the differential equation
\eqref{eq:main} reads
\begin{equation}
  \label{eq:dirichletdiscrete}
  D_t^{+}\bV^n + u^{1,n+1}\circ \bD_x \bV^{n+1} + u^{2,n+1}\circ \bD_y \bV^{n+1} - C^{n+1} \bV^{n+1}  
  = \mathcal{B}(\bV^{n+1}-\mathbf{g}^{n+1}),
\end{equation}
while $\bV^0$ is given. Here $C^n$ denotes the matrix
$$
C^n=
\begin{pmatrix}
  -\bD_y u^{2,n} & \bD_y u^{1,n} \\ \bD_x u^{2,n} & -\bD_x u^{1,n} 
\end{pmatrix}.
$$

\begin{theorem}
  \label{thm:stab1} Let $\bV$ be as solution to
  \eqref{eq:dirichletdiscrete} with $\mathbf{g}=0$. If the constants
  in $\mathcal{B}$ is chosen as
\begin{equation}
  \begin{gathered}
    (\sigma_{\mR})_{N-1,j}\le \frac{u^{1,-}(1,y_j)}{2}, \ (\sigma_{\mL})_{0,j} \le  -\frac{u^{1,+}(0,y_j)}{2},
    \ (\sigma_{\mU})_{i,M-1}\le \frac{u^{2,-}(x_i,1)}{2}, \\
    \text{and}\ (\sigma_{\mD})_{i,0} \le -\frac{u^{2,+}(x_i,0)}{2},
  \end{gathered}
  \label{eq:primeconstants}
\end{equation} 
then 
  \begin{equation}
    \label{eq:stabdirichlet}
    \norm{\bV^{n}}^2_{\bP} \le e^{KT} \norm{\bV^{0}}_{\bP}^2,  
  \end{equation}
where $ u^{l,+} = (u^l \vee 0)$, $ u^{l,-} = (u^l \wedge 0)$, for $ l = 1,2$. $K$ is a constant chosen in such a way that 
$(1-c\Dt)^{-1} \le (1+K\Dt)$ for sufficiently small $\Dt$, where $c$ is a constant 
depending on $u^1$, $u^2$, and their derivative approximations, but not on $N$ or $M$.
\end{theorem}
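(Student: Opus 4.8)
The plan is to run a discrete energy estimate directly on \eqref{eq:dirichletdiscrete} with $\mathbf{g}=0$. First I would take the $\bP$ inner product of the scheme with the \emph{new} time level $\bV^{n+1}$, which is the natural test function for the implicit (backward Euler) discretization. The time-derivative term is then handled by the polarization identity $(\bV^{n+1},D_t^{+}\bV^{n})_{\bP} = \frac{1}{2\Dt}\bigl(\norm{\bV^{n+1}}_{\bP}^2 - \norm{\bV^{n}}_{\bP}^2 + \norm{\bV^{n+1}-\bV^{n}}_{\bP}^2\bigr)$, so that the last, nonnegative, term may simply be discarded from the energy balance; this is exactly what makes the scheme unconditionally stable, with no restriction tying $\Dt$ to $\Dx$ or $\Dy$. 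After this step the estimate reduces to controlling the spatial and penalty contributions tested against $\bV^{n+1}$.

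Next I would dispatch the interior terms. Each advection term is rewritten with Lemma~\ref{lem:kurlkurl}: $(\bV,u^{1}\circ\bD_x\bV)_{\bP}$ splits into a boundary quadratic form $\tfrac12\bV^T[(\mR-\mL)(I_N\otimes P_y)](u^1\circ\bV)$ plus a commutator term $\tfrac12(u^1\circ\bD_x\bV-\bD_x(u^1\circ\bV),\bV)_{\bP}$, and analogously in $y$. The commutators are bounded by Lemma~\ref{lemma:variable} (applied componentwise to $\bV$) together with Cauchy--Schwarz, giving a contribution $\le C\norm{\partial_x u^1}_{L^\infty}\norm{\bV^{n+1}}_{\bP}^2$, and likewise for $u^2$. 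The reaction term $(\bV,C^{n+1}\bV)_{\bP}$ is zeroth order and is bounded by $\norm{\bV^{n+1}}_{\bP}^2$ times the maximum of the entries of $C^{n+1}$, i.e. of the derivative approximations $\bD_y u^2,\bD_y u^1,\bD_x u^2,\bD_x u^1$, which are bounded uniformly in $N,M$ by consistency for smooth $u^1,u^2$. All of these feed into the single constant $c$.

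The heart of the argument is the treatment of the boundary terms, and this is the step I expect to be the main obstacle. I would first use $\bP(P_x^{-1}\otimes I_M)=I_N\otimes P_y$ and $\bP(I_N\otimes P_y^{-1})=P_x\otimes I_M$ to write $(\bV,\mathcal{B}\bV)_{\bP}=\bV^T[(I_N\otimes P_y)(\Sigma_{\mL}\mL+\Sigma_{\mR}\mR)+(P_x\otimes I_M)(\Sigma_{\mD}\mD+\Sigma_{\mU}\mU)]\bV$, so that its four pieces align with the four boundary quadratic forms coming from Lemma~\ref{lem:kurlkurl}. Collecting, the net contribution at, say, the right boundary $x=1$ equals, for each $y_j$ and summed over both components of $\bV$, a factor $(P_y)_{jj}\bigl[(\sigma_{\mR})_{N-1,j}-\tfrac12 u^{1,n+1}_{N-1,j}\bigr]$ times $(V^1_{N-1,j})^2+(V^2_{N-1,j})^2$. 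The delicate point is that the sign of the velocity is not fixed: I would check that the stated bound $(\sigma_{\mR})_{N-1,j}\le \tfrac12 u^{1,-}(1,y_j)$ forces this factor to be nonpositive both when $u^1>0$ (outflow, where $u^{1,-}=0$) and when $u^1<0$ (inflow, where $u^{1,-}=u^1$), and similarly verify the three remaining boundaries using $u^{1,+},u^{2,+},u^{2,-}$ and the matching conditions \eqref{eq:primeconstants}. This inflow/outflow case analysis, consistent with the characteristic-function structure of \eqref{eq:boundary}, is what the penalty design is built to achieve, and getting every sign right is the one genuinely error-prone part.

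Finally I would assemble the pieces. Discarding the favorable square term and the nonpositive boundary total leaves $\norm{\bV^{n+1}}_{\bP}^2-\norm{\bV^{n}}_{\bP}^2\le c\,\Dt\,\norm{\bV^{n+1}}_{\bP}^2$, that is $(1-c\Dt)\norm{\bV^{n+1}}_{\bP}^2\le\norm{\bV^{n}}_{\bP}^2$. For $\Dt$ small enough I would absorb the implicit factor via $(1-c\Dt)^{-1}\le 1+K\Dt$, obtaining $\norm{\bV^{n+1}}_{\bP}^2\le(1+K\Dt)\norm{\bV^{n}}_{\bP}^2$. Iterating and using $(1+K\Dt)^n\le e^{Kn\Dt}\le e^{KT}$ for $n\Dt=t^n\le T$ then yields the claimed bound \eqref{eq:stabdirichlet}.
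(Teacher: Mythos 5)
Your proposal is correct and follows essentially the same route as the paper's proof: testing with $\bV^{n+1}$, using the polarization identity for the backward Euler term, splitting the advection terms via Lemma~\ref{lem:kurlkurl}, bounding the commutator and zeroth-order terms via Lemma~\ref{lemma:variable}, and closing with $(1-c\Dt)^{-1}\le 1+K\Dt$ and iteration. The only difference is that your inflow/outflow sign analysis of the combined boundary and penalty quadratic forms is carried out explicitly (and correctly), whereas the paper compresses that step into the single phrase ``Using the conditions (\ref{eq:primeconstants}) we arrive at,'' so your write-up is, if anything, more complete on the one genuinely delicate point.
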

\begin{proof}
Taking the $\bP$ inner product of
  \eqref{eq:dirichletdiscrete} and $\bV^{n+1}$, we get 
  \begin{align*}
&\frac{1}{2} \norm{\bV^{n+1}}_{\bP}^2  - \frac{1}{2} \norm{\bV^{n}}_{\bP}^2  + \frac{1}{2} \norm{\bV^{n+1} - \bV^{n}}_{\bP}^2 
 = - \Dt \left(\bV^{n+1},u^{1,n+1}\circ\bD_x \bV^{n+1}\right)_{\bP} \\
& - \Dt \left(\bV^{n+1},u^{2,n+1}\circ\bD_y \bV^{n+1}\right)_{\bP}
    + \Dt \left(\bV^{n+1},C^{n+1}\bV^{n+1}\right)_{\bP} + \Dt
    \left(\bV^{n+1},\mathcal{B}\bV^{n+1}\right)_{\bP}.
   \end{align*}
Using Lemma~\ref{lem:kurlkurl} we get
   \begin{align*}
\frac{1}{2} \norm{\bV^{n+1}}_{\bP}^2  &-\frac{1}{2} \norm{\bV^{n}}_{\bP}^2  + \frac{1}{2} \norm{\bV^{n+1} - \bV^{n}}_{\bP}^2\\ & = - \Dt \frac{1}{2} (\bV^{n+1})^T\left[ (\mR-\mL)(I_N\otimes
      P_y)\right](u^{1,n+1}\circ \bV^{n+1}) \\
& - \Dt \frac{1}{2} \b(V^{n+1})^T\left[
      (\mU-\mD)(P_x\otimes I_M)\right](u^{2,n+1}\circ \bV^{n+1})
    \\
    & - \Dt \frac{1}{2}\left(u^{1,n+1}\circ \bD_x \bV^{n+1} - \bD_x(u^{1,n+1}\circ
      \bV^{n+1}),\bV^{n+1}\right)_{\bP} \\
& - \Dt \frac{1}{2}\left(u^{2,n+1}\circ \bD_y \bV^{n+1} -
      \bD_y(u^{2,n+1}\circ \bV^{n+1}),\bV^{n+1}\right)_{\bP}
    \\
    &\qquad +\Dt \left(\bV^{n+1},C\bV^{n+1}\right)_{\bP} + \Dt
    \left(\bV^{n+1}, \mathcal{B}\bV^{n+1}\right)_{\bP}.
  \end{align*}
Note that by \eqref{eq:circdiff}, 
\begin{equation}\label{eq:squareterms}
 \begin{aligned}
   \abs{ \left(u^{1,n+1}\circ \bD_x \bV^{n+1} - \bD_x(u^{1,n+1}\circ
       \bV^{n+1}),\bV^{n+1}\right)_{\bP}}&\le  c \norm{\bV^{n+1}}_{\bP}^2,\\
   \abs{\left(u^{2,n+1}\circ \bD_y \bV^{n+1} - \bD_y(u^{2,n+1}\circ
       \bV^{n+1}),\bV^{n+1}\right)_{\bP}}&\le  c \norm{\bV^{n+1}}_{\bP}^2,\\
   \abs{\left(\bV^{n+1},C\bV^{n+1}\right)_{\bP}}&\le c \norm{\bV^{n+1}}_{\bP}^2,
 \end{aligned}
\end{equation}
for some constant $c$ depending on the first derivatives of $u^1$
and $u^2$.  Using the conditions (\ref{eq:primeconstants}) we arrive at
\begin{align*}
\norm{\bV^{n+1}}_{\bP}^2 \le \norm{\bV^{n}}_{\bP}^2 + c \Dt \norm{\bV^{n+1}}_{\bP}^2    
\end{align*}
Now we can use the fact that $(1-c\Dt)^{-1} \le (1+K\Dt)$ for sufficiently small $\Dt$.
Consequently this gives the required bound \eqref{eq:stabdirichlet}.
\end{proof}

\section{Numerical Experiment}
\label{sec:numex}

We test the fully-discrete SBP-SAT scheme of the previous section on a suite of
numerical experiments in order to demonstrate the effectiveness of
these schemes. We will use two different schemes : $SBP2$ and $SBP4$ scheme which 
are second-order (first-order) and fourth order (second-order) accurate in 
the interior (boundary) resulting in an overall second and third-order of accuracy.

\vspace{0.5cm}

In this experiment, we consider (\ref{eq:main}) with the
divergence-free velocity field $\U(x,y) = (-y,x)^T$. The
exact solution can be easily calculated by the method of
characteristics and takes the form
\begin{equation}
  \label{eq:ex}
  \B(\X,t)=R(t)\B_0(R(-t)\X),
\end{equation}
where $R(t)$ is a rotation matrix with angle $t$ and represents rotation of the initial data about the origin.

We consider the same test setup as in \cite{TF1} and \cite{fkrsid1} by
choosing the divergence free initial data,
\begin{equation}
  \B_0(x,y)=4
  \begin{pmatrix}
    -y\\ x-\frac{1}{2}
  \end{pmatrix}
  e^{-20\left((x-1/2)^2+y^2\right)},
\end{equation}
and the computational domain $[-1,1] \times [-1,1]$. Since the exact solution is known in
this case, one can in principle use this to specify the
boundary data $g$. Instead, we decided to
mimic a free space boundary (artificial boundary) by taking $g=0$. (which is a good guess at a far-field boundary).

We run this test case with $SBP2$ and $SBP4$ schemes and present
different sets of results. In Figure \ref{fig:1}, we plot $|\B|=(|B^1|^2+|B^2|^2)^{1/2}$ at times $t= \pi$ (half-rotation) and $t=2\pi$ (one
full rotation) with the $SBP2$ and $SBP4$ schemes. 
\begin{figure}[htbp]
  \centering
\subfigure[half rotation, SBP2]{    \includegraphics[width=0.45\linewidth]{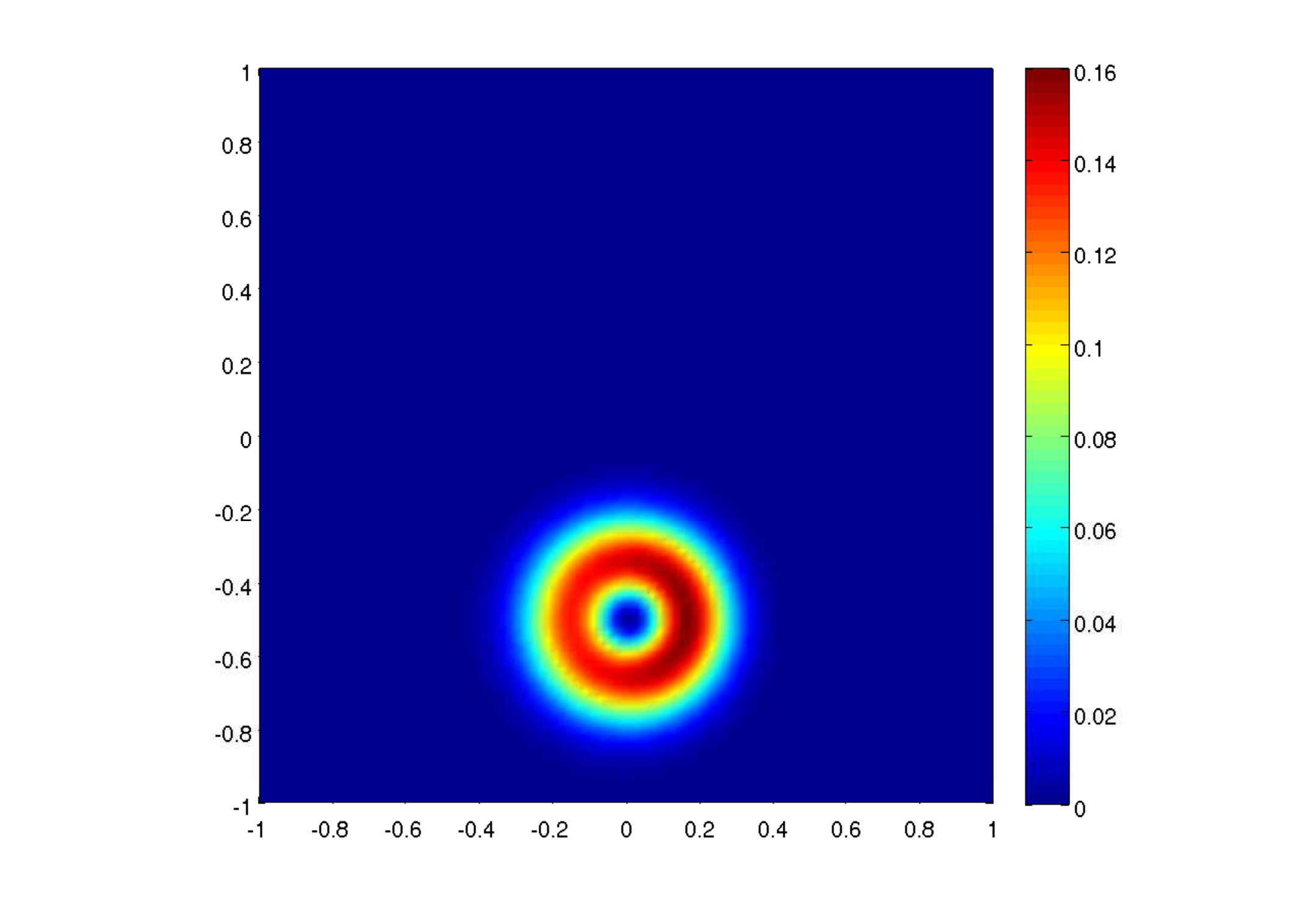} }
\subfigure[full rotation, SBP2]{    \includegraphics[width=0.45\linewidth]{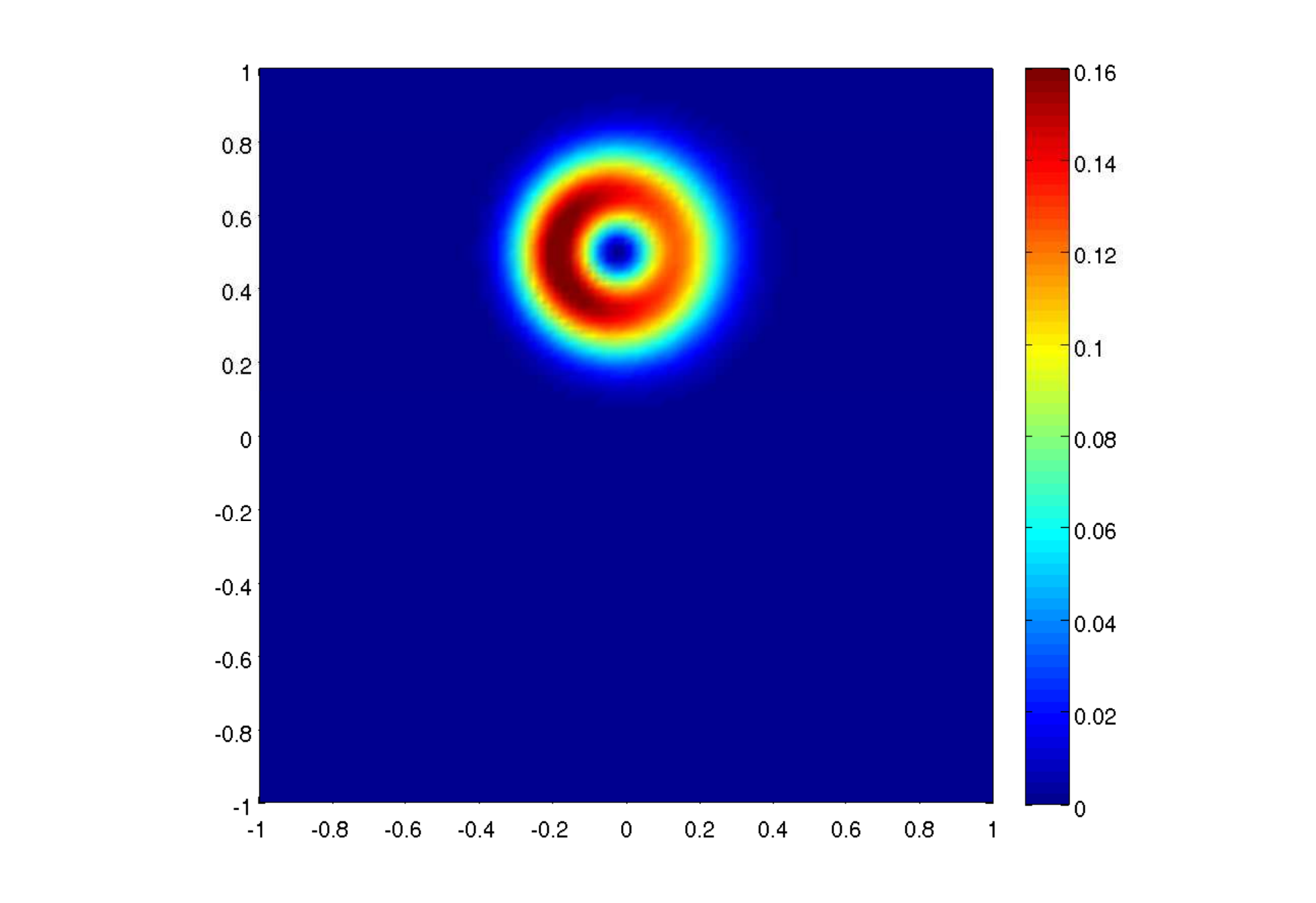}}\\
\subfigure[half rotation, SBP4]{    \includegraphics[width=0.45\linewidth]{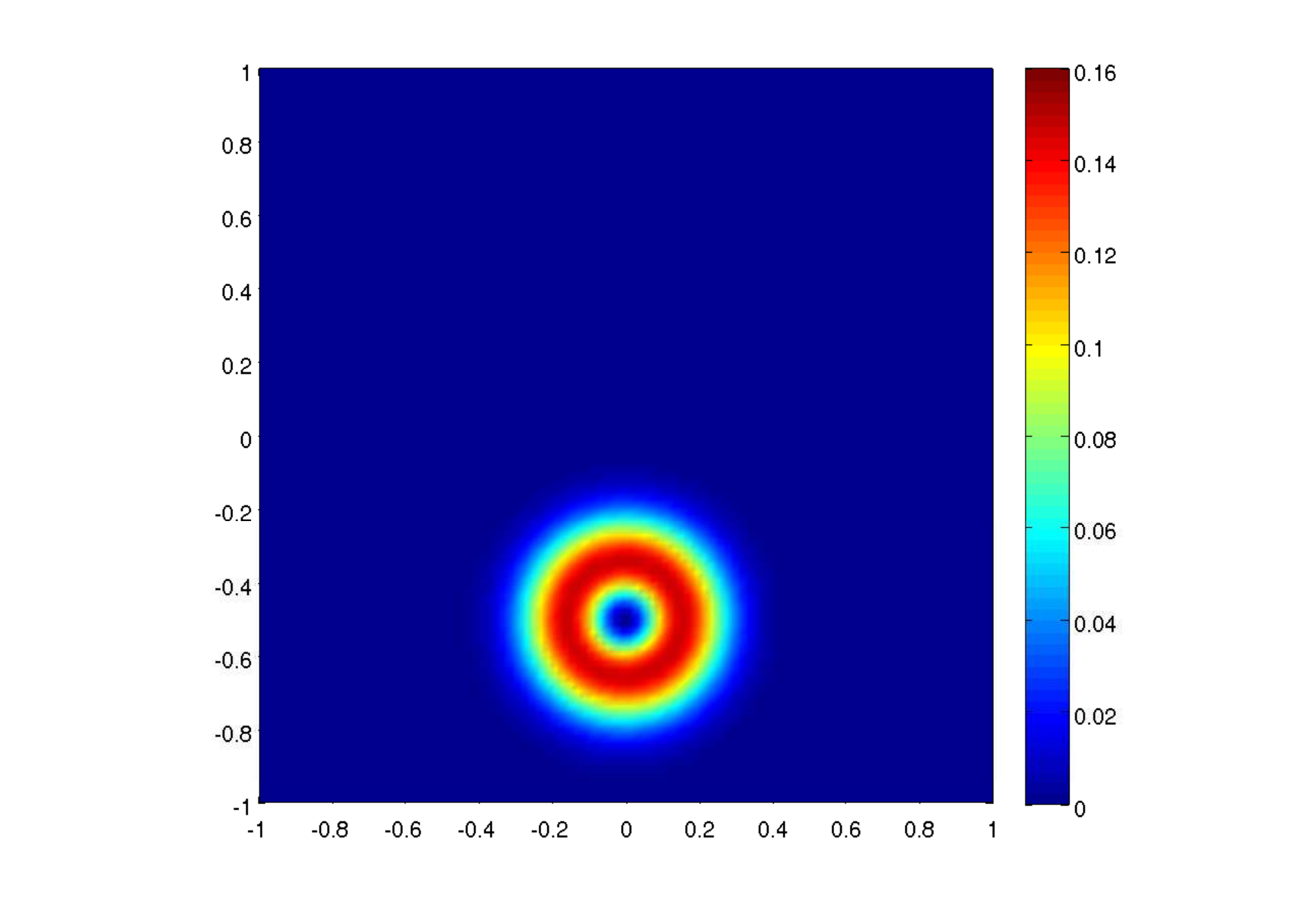}}
\subfigure[full rotation, SBP4]{    \includegraphics[width=0.45\linewidth]{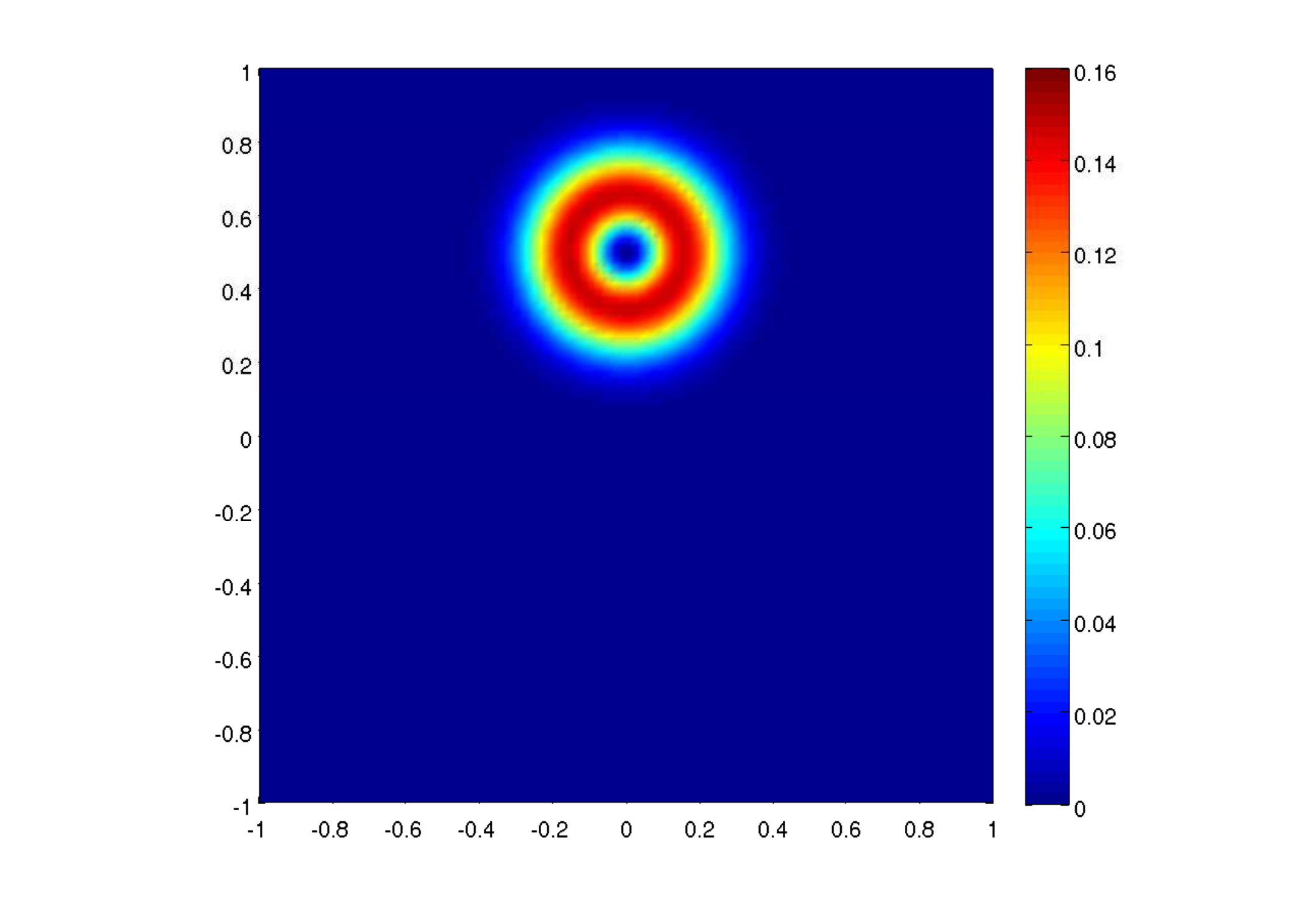} }
 \caption{Numerical results for $|\B|$.}
\protect \label{fig:1}  
\end{figure}
As shown in this figure, $SBP2$ and $SBP4$ schemes resolve the
solution quite well. In fact, $SBP4$ is very accurate and keeps the
hump intact throughout the rotation.
\begin{table}[htbp]
\centering
\begin{tabular}{c|rrrr}
Grid size & $SBP2$ & rate & $SBP4$ & rate\\
\hline
40$\times$40   & 6.9 $\times$\e{1}     &       & 8.0 $\times$\e0     &    \\
80$\times$80   & 2.1 $\times$\e{1}     & 1.7   & 5.0 $\times$\e{-1}  & 4.0 \\
160$\times$160 & 5.5 $\times$\e{0}     & 2.0   & 4.5 $\times$\e{-2}  & 3.5 \\
320$\times$320 & 1.3 $\times$\e{0}     & 2.0   & 5.1 $\times$\e{-3}  & 3.1 \\
640$\times$640 & 3.3 $\times$\e{-1}    & 2.0   & 6.4 $\times$\e{-4}  & 3.0
\end{tabular}
\caption{Relative percentage errors in $l^2$ for $|\B|$ at time $t = 2
  \pi$ and rates of convergence with $SBP2$ and $SBP4$ schemes.}
\label{tab:1}
\end{table}
In Table \ref{tab:1}, we present percentage relative errors in $l^2$. The
errors are computed at time $t = 2 \pi$ (one rotation) on a sequence
of meshes for both the $SBP2$ and $SBP4$ schemes. The results show
that the errors are quite low, particularly for $SBP4$ and the rate of convergence approaches the expected values of $2$ for $SBP2$ and $3$ for $SBP4$. Furthermore, the order of accuracy is
unaffected at these resolutions by using zero Dirichlet boundary data instead of the exact
solution at the boundary.

\section{Conclusion}
\label{sec:conc}
We have considered a fully-discrete scheme for 
the magnetic induction equations that arise as a
submodel in the MHD equations of plasma physics. 
In future, our plan is to extend the semi-discrete scheme
given in \cite{uk1} to a semi-implicit fully-discrete
scheme. We would like to show the stabilty of the aforementioned
semi-implicit scheme in case of magnetic induction equations with
resistivity.

\end{document}